\newtheorem{thm}{Theorem}
\newtheorem{lem}{Lemma}
\begin{document}

\title{Preservation of complete Baireness}
\author{Sergey Medvedev}
\address{South Ural State University,  Chelyabinsk, Russia}
\email{medvedevsv@susu.ru}

\begin{abstract}
The main result is the following.
Let $f \colon X \rightarrow Y$ be a continuous mapping of a completely Baire space $X$ onto a hereditary weakly Preiss-Simon regular space $Y$ such that the image of every open subset of $X$ is a resolvable set in $Y$. Then $Y$ is completely Baire.

The classical Hurewicz theorem about closed embedding of the space of rational numbers into metrizable spaces is generalized to weakly Preiss-Simon regular spaces.
\end{abstract}

\subjclass[2010]{54H05, 03E15}

\keywords
{resolvable set, completely Baire space, weakly Preiss-Simon space, rational numbers}

\maketitle

One of the main tasks considered in topology is to find the properties of topological spaces that are preserved when moving to the image of the space with this property. For example, Sierpi\'{n}ski~\cite{Si} and Vain\v{s}tain~\cite{Va} have established that if $Y$ is an image of a completely metrizable space $X$ under a closed mapping or an open mapping, then $Y$ is completely metrizable.
A natural question to ask is which maps preserve complete metrizability.

Ostrovsky~\cite{Ost} proved that if a continuous mapping from a zero-dimensional Polish  (separable, completely metrizable) space $X$ onto a metrizable space $Y$ takes any clopen set $W \subset X$ to the union of an open set and a closed set, then $Y$ is completely metrizable. He raised the question whether the same is true when the images are the intersection of an open set and a closed set.
Gao and Kieftenbeld~\cite{GK} showed that if the image under $f \colon X \rightarrow Y$ of every open set or every closed set in a Polish space $X$ is a resolvable set in $Y$, then $Y$ is Polish. Holick\'{y} and Pol~\cite{HP} established similar results for the nonseparable case. The latter result solves Ostrovsky's problem for metrizable spaces.

It is notorious~\cite[Theorem 4.3.26]{Eng} that a topological space is completely metrizable if and only if it is a \v{C}ech-complete metrizable space.
Holick\'{y}~\cite[Corollary 3.3]{Hol} proved among other things that if $f \colon X \rightarrow Y$ is a continuous mapping from a \v{C}ech-complete space $X$ onto a metrizable space $Y$ such that $f$ takes open sets in $X$ to resolvable sets in $Y$, then $Y$ is completely metrizable. 

Let us pass now from a \v{C}ech-complete metrizable space to a completely Baire space.
Suppose we are given a continuous mapping  $f \colon X \rightarrow Y$  from a zero-dimensional  metrizable space $X$ onto a metrizable space $Y$ such that $f$ takes clopen sets in $X$ to resolvable sets in $Y$. Holick\'{y} and Pol~\cite[Corollary~4]{HP} showed that if $X$ is completely Baire, then so is $Y$. 
Our main Theorem~\ref{Th-m} generalizes this statement to regular spaces.
In order to obtain it, we  strengthen the Hurewicz theorem~\cite{Hur} about closed embedding of the space of rational numbers into metrizable spaces, see Theorem~\ref{t:1}.

\textbf{Notation.} For all undefined terms see~\cite{Eng}.

A set $E$ in a space $X$ is said to be \textit{resolvable}~\cite[\S~12]{Kur1} if it can be represented as
\begin{equation*}
E= (F_1 \setminus F_2) \cup (F_3 \setminus F_4) \cup \ldots \cup (F_\xi \setminus F_{\xi+1}) \cup  \ldots ,
\end{equation*}
where $\langle F_\xi \rangle$ forms a decreasing transfinite sequence of closed sets in $X$. Equivalently, $E$ is resolvable if, for every closed set $F \subset X$, the boundary of $F \bigcap E$ relative to $F$ is nowhere dense in $F$. Resolvable sets form a field of subsets of $X$ which contains closed sets.

A topological space $X$ is called a \textit{Baire space} if the intersection of countably many dense open sets in $X$ is dense. We call a space \textit{completely Baire} (or hereditarily Baire cf.~\cite{HP}) if every closed subspace of it is Baire. By Hurewicz’s theorem~\cite{Hur}, a metrizable space is completely Baire if and only if it contains no closed homeomorphic copy of rational numbers.

A topological space $X$ is called \textit{Preiss-Simon} at a point $x \in X$ if for any subset $A \subset X$ with $x \in  \overline{A}$ there is a sequence $\langle U_n  \colon n \in \omega \rangle$ of nonempty open subsets of $A$ that converges to $x$ in the sense that each neighborhood of $x$ contains all but finitely many sets $U_n$. A space $X$ is called a \textit{Preiss-Simon space} if $X$ is Preiss-Simon at each point $x \in X$. It is clear that every first countable space is Preiss-Simon and each Preiss–Simon space is Fr\`{e}chet–Urysohn.

We shall say that a space $X$ is \textit{weakly Preiss-Simon} at a point $x \in X$ if there is a sequence $\langle U_n  \colon n \in \omega \rangle$ of nonempty open subsets of $X$ that converges to $x$. Denote by $wPS(X)$ the set of points $x \in X$ at which $X$ is weakly Preiss-Simon. A space $X$ is called \textit{weakly Preiss-Simon} space if the set $wPS(X)$ is dense in $X$. A space $X$ is called \textit{hereditary weakly Preiss-Simon} space if every nonempty closed subset of $X$ is a weakly Preiss-Simon space, i. e., the set $wPS(F)$ is dense in $F$ for every nonempty closed set $F \subset X$. Clearly, every Preiss-Simon space is hereditary weakly Preiss-Simon.

\begin{lem}\label{L:1}
Let $F$ be a nowhere dense closed subset of a regular space $X$. Suppose $X$ is weakly Preiss-Simon at a non-isolated point $x$. Then for every neighborhood $O$ of $x$ there is a sequence $\langle U_n  \colon n \in \omega \rangle$ of nonempty open sets such that{\rm{:}}

\begin{enumerate}[\upshape (a)]

\item $\langle U_n  \colon n \in \omega \rangle$ converges to $x$,

\item $\overline{U_n} \subset O \setminus \{x \}$ and $\overline{U_n} \bigcap F = \emptyset$  for every $n \in \omega$,

\item $\overline{U_n} \bigcap \overline{U_k} = \emptyset$ provided $n \neq k$,

\item $\overline{\bigcup \{U_n \colon n \in \omega \} } = \{x \} \cup \bigcup \{ \overline{U_n} \colon n \in \omega \}$.
\end{enumerate}
\end{lem}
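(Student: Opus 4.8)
The plan is to extract from the weak Preiss--Simon property at $x$ a convenient sequence of open sets, prune and shrink it, and then carry out a recursive thinning that forces (c); conditions (a) and (d) will then come out almost for free from regularity. First fix a sequence $\langle V_n : n\in\omega\rangle$ of nonempty open sets converging to $x$. Discarding finitely many terms, we may assume $V_n\subseteq O$ for all $n$. Since $x$ is not isolated, $V_n\neq\{x\}$, so $V_n\setminus\{x\}$ is a nonempty open set; as $F$ is nowhere dense this set is not contained in $F$, so we may pick a point $y_n\in V_n\setminus(F\cup\{x\})$. Because $\langle V_n\rangle$ converges to $x$, so does the sequence of points $\langle y_n\rangle$. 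A regular space is Hausdorff, so no point $y\neq x$ can equal $y_n$ for infinitely many $n$ (separate $y$ from $x$ by an open set, which then contains only finitely many $y_n$); passing to a subsequence, we may assume the $y_n$ are pairwise distinct. The same separation argument shows that $\{y_m : m\neq n\}\cup\{x\}$ is closed in $X$ for every $n$. Finally, using regularity, choose open sets $W_n$ with $y_n\in W_n\subseteq\overline{W_n}\subseteq V_n\setminus(F\cup\{x\})\subseteq O\setminus\{x\}$; then $\overline{W_n}\cap F=\emptyset$, $x\notin\overline{W_n}$, and $\langle W_n\rangle$ still converges to $x$.

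Next I would build the $U_n$ recursively, shrinking $W_n$, and maintain the invariants that $y_n\in U_n$, that $\overline{U_n}\subseteq W_n$, that $\overline{U_i}\cap\overline{U_j}=\emptyset$ for $i<j\le n$, and --- crucially --- that $\overline{U_n}$ contains no $y_m$ with $m\neq n$ and does not contain $x$. At step $n$ the set $L_n=\bigl(\bigcup_{i<n}\overline{U_i}\bigr)\cup\{y_m : m\neq n\}\cup\{x\}$ is closed, and by the invariants $y_n\notin L_n$ (it avoids each previous $\overline{U_i}$ because $\overline{U_i}$ misses $y_n$, and $y_n$ is distinct from the other $y_m$ and from $x$). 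Hence $y_n\in W_n\setminus L_n$, an open set, and regularity yields an open $U_n$ with $y_n\in U_n\subseteq\overline{U_n}\subseteq W_n\setminus L_n$, which preserves all the invariants.

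It remains to verify (a)--(d) for $\langle U_n\rangle$. For (a): $U_n\subseteq W_n\subseteq V_n$ and $\langle V_n\rangle\to x$, so every neighborhood of $x$ contains all but finitely many $U_n$. Statement (b) is immediate from $\overline{U_n}\subseteq\overline{W_n}\subseteq O\setminus\{x\}$ together with $\overline{W_n}\cap F=\emptyset$, and (c) is the recursion invariant. For (d), the inclusion $\supseteq$ holds because $x\in\overline{\bigcup_n U_n}$ by (a) and each $\overline{U_n}$ trivially lies in that closure; for $\subseteq$, take $z\in\overline{\bigcup_n U_n}$ with $z\neq x$, use regularity to get an open $H\ni x$ with $z\notin\overline{H}$, and use (a) to find $N$ with $U_n\subseteq H$ for $n\ge N$; then $z\notin\overline{H}\supseteq\overline{\bigcup_{n\ge N}U_n}$, while $\overline{\bigcup_n U_n}=\bigl(\bigcup_{n<N}\overline{U_n}\bigr)\cup\overline{\bigcup_{n\ge N}U_n}$, so $z\in\bigcup_{n<N}\overline{U_n}$.

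The main obstacle is the recursion that delivers (c): a naive shrinking fails because a previously chosen $\overline{U_i}$ might already swallow the next point $y_n$, leaving no room to place $U_n$; the remedy is the auxiliary invariant forcing $\overline{U_n}$ to avoid all the other selected points $y_m$, which is legitimate exactly because $\{y_m : m\neq n\}\cup\{x\}$ is closed --- and this in turn is where Hausdorffness (hence regularity) is essential. Conditions (a) and (d) are then soft consequences of the sets being eventually concentrated near $x$, plus regularity.
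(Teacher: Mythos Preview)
Your argument is correct; all four conditions are properly verified, and in particular your treatment of (d) is more explicit than the paper's ``one can check''.

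The paper's proof follows the same overall plan but with a simpler recursion. Rather than fixing the sequence $\langle V_n\rangle$ in advance, selecting target points $y_n$, and then carrying the auxiliary invariant that each $\overline{U_n}$ avoids all the other $y_m$, the paper chooses the host set dynamically: at stage $n$ it picks the least index $k_n$ with $V_{k_n}\subset O\setminus\bigcup_{i<n}\overline{U_i}$ (such $k_n$ exists since $x\notin\overline{U_i}$ makes the right-hand side an open neighborhood of $x$), and only then selects a point and shrinks to get $U_n$ with $\overline{U_n}\subset V_{k_n}\setminus(\{x\}\cup F)$. Disjointness (c) is then automatic, and there is no need to verify that $\{y_m:m\neq n\}\cup\{x\}$ is closed or to maintain your extra invariant. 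Your ``obstacle'' paragraph identifies exactly the difficulty your scheme creates by committing to the $y_n$ up front; the paper simply sidesteps it. Both routes work, but the paper's is shorter, while yours has the virtue of making the verification of (a)--(d) fully transparent.
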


\begin{proof}
Take a sequence $\langle V_k  \colon k \in \omega \rangle$ of nonempty open sets in $X$ that converges to $x$. The neighborhood $O$ of $x$ contains all but finitely many sets $V_k$. Choose the least number $k_0$ with $V_{k_0} \subset O$. The point $x$ is non-isolated in $X$. Then there is a point $x_0 \in V_{k_0} \setminus (\{x \} \bigcup F)$ and its neighborhood $U_0$ satisfying $\overline{U_0} \subset V_{k_0} \setminus (\{x \} \bigcup F)$. Similarly, choose the least $k_1$ with $V_{k_1} \subset O \setminus \overline{U_0}$. Find a point $x_1 \in V_{k_1} \setminus \{x \}$ and its neighborhood $U_1$ satisfying $\overline{U_1} \subset V_{k_1} \setminus (\{x \} \bigcup F)$. Clearly, $\overline{U_0} \bigcap \overline{U_1}  = \emptyset$. Next, choose the least $k_2$ with $V_{k_2} \subset O \setminus ( \overline{U_0} \bigcup \overline{U_1})$. Find a point $x_2 \in V_{k_2} \setminus \{x \}$ and its neighborhood $U_2$ satisfying $\overline{U_2} \subset V_{k_2} \setminus (\{x \} \bigcup F)$, and so on. One can check that all conditions (a)--(d) hold.
\end{proof}

The following statement generalizes the Hurewicz theorem~\cite{Hur} about closed embedding of the space of rational numbers $\mathbb{Q}$ into metrizable spaces. Note that Theorem~\ref{t:1} was proved in \cite{M86} for a regular space $X$ of the first category with a dense first-countable subset.

\begin{thm}\label{t:1}
Let $X$ be a regular space of the first category such that the set $wPS(X)$ is dense in $X$. Then $X$ contains a closed copy of the space of rational numbers.
\end{thm}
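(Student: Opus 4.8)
The plan is to carry out a Hurewicz-type construction: build a scheme $\{U_s : s \in \omega^{<\omega}\}$ of nonempty open subsets of $X$ together with points $x_s \in U_s$, and then show that $Q = \{x_s : s \in \omega^{<\omega}\}$ is a closed subset of $X$ homeomorphic to $\mathbb{Q}$. We may assume $X \neq \emptyset$. Since $X$ is of the first category, fix an increasing sequence $\langle P_m : m \in \omega \rangle$ of closed nowhere dense sets with $\bigcup_m P_m = X$; note that a nonempty space of the first category has no isolated point, so every point of $X$ is non-isolated. Put $U_\emptyset = X$ and pick $x_\emptyset \in wPS(X)$ (possible since $wPS(X)$ is dense). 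Given a nonempty open $U_s$ and a point $x_s \in U_s \cap wPS(X)$, apply Lemma~\ref{L:1} to the non-isolated point $x_s$, the neighborhood $O = U_s$ and the closed nowhere dense set $F = P_{|s|}$ to obtain nonempty open sets $\langle U_{s^\frown n} : n \in \omega \rangle$ satisfying (a)--(d); then pick $x_{s^\frown n} \in U_{s^\frown n} \cap wPS(X)$, again using the density of $wPS(X)$. The scheme then has the usual tree properties — in particular $\overline{U_{s^\frown n}} \subseteq U_s \setminus \{x_s\}$ and distinct siblings have disjoint closures — together with the two features that drive everything: $\overline{U_{s^\frown n}} \cap P_{|s|} = \emptyset$, which will yield closedness by exhausting the first-category decomposition, and $\overline{\bigcup_n U_{s^\frown n}} = \{x_s\} \cup \bigcup_n \overline{U_{s^\frown n}}$ (clause (d)), which will yield first countability.

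Next I would prove $Q \cong \mathbb{Q}$. The points $x_s$ are pairwise distinct (distinct nodes of a fixed length have disjoint closures $\overline{U_s}$, and proper extensions drop into $U_{s^\frown n} \not\ni x_s$), so $Q$ is countably infinite; each $x_s$ is the limit of $\langle x_{s^\frown n} \rangle$ (as $U_{s^\frown n} \to x_s$ and $x_{s^\frown n} \in U_{s^\frown n}$) and $x_{s^\frown n} \neq x_s$, so $Q$ has no isolated point. For first countability I claim that $\{\, \{x_s\} \cup \big( Q \cap \bigcup_{n \geq m} U_{s^\frown n} \big) : m \in \omega \,\}$ is a neighborhood base of $x_s$ in $Q$: such a set is open in $Q$ because its complement in $Q$ splits into the points $x_t$ with $t$ incomparable with $s$ or a proper initial segment of $s$, which all lie outside the open set $U_s \ni x_s$ (here one uses clauses (b),(c)), and the points $x_t$ with $t$ extending some $s^\frown n$, $n < m$, which lie in the finitely many closed sets $\overline{U_{s^\frown n}}$, none containing $x_s$; and it is a base at $x_s$ because $U_{s^\frown n} \to x_s$. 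A countable first-countable space is second countable, and $Q$ is regular as a subspace of the regular space $X$, so $Q$ is metrizable by the Urysohn metrization theorem; being a nonempty countable metrizable space without isolated points, $Q$ is homeomorphic to $\mathbb{Q}$ by Sierpi\'nski's theorem.

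It remains to show $Q$ is closed in $X$. Put $G_k = \bigcup\{U_s : |s| = k\}$. An easy induction (distinct nodes of the same length have disjoint closures, since $\overline{U_{s^\frown n}} \subseteq U_s$ and siblings are separated) together with clause (d) applied level by level gives $\overline{G_k} = \{x_t : |t| < k\} \cup \bigcup_{|s| = k} \overline{U_s}$, a union of pairwise disjoint sets. Now let $y \in \overline{Q}$. Since $Q \setminus G_k \subseteq \{x_t : |t| < k\}$ is finite, $y \in \overline{Q \setminus \{x_t : |t| < k\}} \subseteq \overline{G_k}$ for every $k$. If $y \notin Q$, then for each $k$ there is a unique $s_k$ with $|s_k| = k$ and $y \in \overline{U_{s_k}}$; since $\overline{U_{s_{k+1}}} \subseteq U_r$ for $r$ the parent of $s_{k+1}$, while $\overline{U_r} \cap \overline{U_{s_k}} = \emptyset$ unless $r = s_k$, the nodes $s_k$ form a branch $s_1 \subsetneq s_2 \subsetneq \cdots$. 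But $\overline{U_{s_k}} \cap P_{k-1} = \emptyset$ by construction, so $y \notin \bigcup_k P_{k-1} = X$, a contradiction. Hence $\overline{Q} = Q$, and the proof is complete.

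I expect the main obstacle to be the middle step, namely showing that the constructed set $Q$ is first countable (hence, being countable, second countable and therefore metrizable). This is precisely where the weak Preiss-Simon hypothesis is used and where the argument departs from Hurewicz's metric proof, in which metrizability of the constructed dense-in-itself countable set is automatic: clause (d) of Lemma~\ref{L:1} is designed exactly so that the traces of the scheme on $Q$ provide a countable neighborhood base at every point, after which the Urysohn and Sierpi\'nski theorems do the rest. The closedness argument is the standard ``descend along a branch and exhaust the first-category decomposition'' device and should be routine.
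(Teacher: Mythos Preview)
Your proof follows essentially the same route as the paper's: the tree scheme built via Lemma~\ref{L:1}, the verification that $Q$ is first countable (hence metrizable by Urysohn, hence $\cong\mathbb{Q}$ by Sierpi\'nski), and an inductive computation of the closure of the level-$k$ union to deduce closedness. Your sets $\{x_t:|t|<k\}\cup\bigcup_{|s|=k}\overline{U_s}$ are exactly the paper's $Z_{k-1}\cup P_{k-1}$, and your neighborhood base at $x_s$ agrees with the paper's $V_{s\hat{\,}n}$.

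There is one genuine slip. In the closedness argument you write ``Since $Q \setminus G_k \subseteq \{x_t : |t| < k\}$ is finite, $y \in \overline{Q \setminus \{x_t : |t| < k\}} \subseteq \overline{G_k}$.'' But the tree is $\omega^{<\omega}$, so $\{x_t:|t|<k\}$ is \emph{infinite} for $k\ge 2$; you cannot discard it pointwise. The conclusion you need, $\overline{Q}\subseteq\overline{G_k}$, is still immediate once you observe $Q\subseteq\overline{G_k}$: the points $x_t$ with $|t|\ge k$ lie in $G_k$, and each $x_t$ with $|t|<k$ is a limit of its descendants at level $k$. So the error is local and easily repaired; the rest of your branch-descent argument (forcing $y\notin\bigcup_m P_m=X$) is exactly the paper's conclusion via condition~(b).

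One further remark: the ``easy induction'' giving $\overline{G_k}=\{x_t:|t|<k\}\cup\bigcup_{|s|=k}\overline{U_s}$ is where the convergence clause (your Lemma~\ref{L:1}(a)/(d)) really earns its keep. Disjointness of the closures $\overline{U_s}$ at a fixed level does \emph{not} by itself prevent a point of $\overline{U_{s_0}}$ from being a limit of $\bigcup_{s\neq s_0}U_s$; what rules this out is that, working inside the parent $U_t$ of $s_0$, the siblings $U_{t\hat{\,}j}$ converge to $x_t\neq y$, so all but finitely many of them are trapped in a neighborhood of $x_t$ missing $y$, and the remaining finitely many have closures disjoint from $\overline{U_{s_0}}$. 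The paper's inductive proof that $Z_n\cup P_n$ is closed uses the same mechanism (via condition~(f)); your phrase ``clause~(d) applied level by level'' is pointing at the right tool, but the step deserves a sentence.
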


\begin{proof}
Let $X = \bigcup \{X_n \colon n \in \omega \}$, where each $X_n$ is a closed nowhere dense subset of $X$. Without loss of generality, $X_0 \subset X_1 \subset \ldots$.

By induction on the tree $\omega^{< \omega}$, we shall construct points $x_t \in wPS(X)$, open sets $U_t$, and sequences $\langle U_{t \hat{\,} k} \colon k \in \omega \rangle$ of open sets such that the following conditions hold for each $t \in \omega^{< \omega}$:

\begin{enumerate}[\upshape (a)]

\item $U_t$ is a neighborhood of the point $x_t$,

\item $\overline{U_t} \bigcap X_n = \emptyset$ whenever $t \in \omega^{n+1}$,

\item $\overline{U_{t \hat{\,} k}} \subset U_t \setminus \{x_t \}$ for each $k \in \omega$,

\item the sequence $\langle U_{t \hat{\,} k} \colon k \in \omega \rangle$  converges to $x_t$,

\item  $\overline{U_{t \hat{\,} i}} \bigcap \overline{U_{t \hat{\,} k}} = \emptyset$ provided $i \neq k$,

\item $\overline{\bigcup \{U_{t \hat{\,} k} \colon k \in \omega \} } = \{x_t \} \cup \bigcup \{ \overline{U_{t \hat{\,} k}} \colon k \in \omega \}$.
\end{enumerate}

The space $X$ has no isolated points as a set of the first category.

For the base step of the induction, take a point $x_\emptyset \in wPS(X)$ and $U_\emptyset = X$. By Lemma~\ref{L:1} with $F = X_0$ and $O = X$, there is a sequence $\langle U_k  \colon k \in \omega \rangle$ of nonempty open sets satisfying conditions (a)--(f) for $n =0$.

Suppose the points $x_s$ and the sequences $\langle U_{s \hat{\,} k} \colon k \in \omega \rangle$ of open sets have been constructed for every $s \in \omega^j$ with $j \leq n$. Fix $s \in \omega^n$. The set $X_{n+1}$ is nowhere dense in the open set $U_{s \hat{\,} i}$ for every $i \in \omega$. Then we can find a point $x_{s \hat{\,} i} \in wPS(X) \bigcap (U_{s \hat{\,} i} \setminus X_{n+1})$. By Lemma~\ref{L:1} with $F = X_{n+1}$ and $O = U_{s \hat{\,} i}$, there is a sequence $\langle U_{s \hat{\,} i \hat{\,} k}  \colon k \in \omega \rangle$ of nonempty open sets satisfying conditions (a)--(f) for $t = s \hat{\,} i$. This completes the induction step.

Clearly, the set $Z = \bigcup \{x_t \colon t \in \omega^{< \omega}\}$ is countable and has no isolated points. For every $t \in \omega^{< \omega}$ from conditions (d) and (e) it follows that the sequence $\langle V_{t \hat{\,} n} \colon n \in \omega \rangle$ forms a countable base at the point $x_t$ with respect to $Z$, where 
\[V_{t \hat{\,} n} = Z \bigcap \left(U_t \setminus \cup \{\overline{U_{t \hat{\,} k}} \colon k <n \} \right).\]
Hence, $Z$ is a second-countable space. The Urysohn theorem \cite[Theorem 4.2.9]{Eng} implies that $Z$ is metrizable. According to the Sierpi\'{n}ski theorem~\cite{Si} (see also \cite[Exercise 6.2.A]{Eng}), $Z$ is homeomorphic to the space of rational numbers.

It remains to verify that $Z$ is closed in $X$. For each $n \in \omega$ define the sets
\begin{align*}
Z_n &= \bigcup \left\{x_s \colon s \in \omega^j, j \leq n \right\}, \\
P_n &= \bigcup \left\{\overline{U_{t \hat{\,} k}} \colon t \in \omega^n, k  \in \omega \right\}. 
\end{align*}
Then $Z = \bigcup \{Z_n \colon n \in \omega \}$, $Z_n \cap P_n = \emptyset$, and $Z \subset Z_n \bigcup P_n$ for each $n$.

By induction on $n \in \omega$, let us check that the union $Z_n \cup P_n$ is closed in $X$ for each $n$. 

For $n = 0$, the set $Z_0 \cup P_0$ is closed in $X$ by Lemma~\ref{L:1}. We now show that $Z_{n+1} \cup P_{n+1}$ is closed under the induction assumption on closeness of $Z_n \cup P_n$. Suppose that, on the contrary, there is a point 
\[x \in \overline{(Z_{n+1} \cup P_{n+1})} \setminus (Z_{n+1} \cup P_{n+1}).\] 
Since $(Z_{n+1} \setminus Z_n) \cup P_{n+1} \subset P_n$, we have $x \in \overline{Z_n \cup P_n} = Z_n \cup P_n$. Then $x \in P_n \setminus Z_{n+1}$.
From (c) and (e) it follows that there is a unique index $t \hat{\,} k \in \omega^{n+1}$ with $x \in \overline{U_{t \hat{\,} k}} \subset P_n$. According to (f), we have 
\[\overline{P_{n+1}} \cap \overline{U_{t \hat{\,} k}} = 
\{x_{t \hat{\,} k} \} \cup \bigcup \left\{ \overline{U_{t \hat{\,} k \hat{\,} i}} \colon i \in \omega \right\}.\]
By (e), $x \in \overline{U_{t \hat{\,} k \hat{\,} j}}$ for some $j$. Then $x \in P_{n+1}$, a contradiction. 

Thus, the set $Z_n \cup P_n$ is closed in $X$ for each $n$.

By construction, $Z \subset Z_n \cup P_n$ for each $n$. Then $\overline{Z} \subset \bigcap \{Z_n \cup P_n \colon n \in \omega \}$.

Striving for a contradiction, suppose there is a point $x \in \overline{Z} \setminus Z$. Fix the least $i$ with $x \in X_i$. Obviously, $x \notin Z_i$. Condition (b) implies that $P_i \cap X_i = \emptyset$. Then $x \notin Z_i \cup P_i$, a contradiction. 
\end{proof}

Let us recall the well-known theorem of van Douwen~\cite{D87}. For a first countable regular space $X$, he proved that every closed subspace of $X$ is a Baire space if and only if $X$ has no countable closed crowded subspace. Here a space is said to be \textit{crowded} if it has no isolated points. Note that up to homeomorphism the space of rational numbers is the only countable first-countable crowded regular space ~\cite{Si}. In other words, for a first countable regular space $X$, van Douwen proved that $X$ is completely Baire if and only if the space of rational numbers does not embed as a closed subspace into $X$.

Now we obtain a generalization of this result.

\begin{thm}\label{t:2}
A hereditary weakly Preiss-Simon regular space $X$ is completely Baire if and only if the space of rational numbers does not embed as a closed subspace into $X$.
\end{thm}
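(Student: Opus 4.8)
The plan is to prove the two implications separately: the forward direction is essentially immediate, and the reverse direction is a direct application of Theorem~\ref{t:1}.

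For the necessity I would observe that the space of rational numbers, being a nonempty countable crowded metrizable space, is of the first category in itself and hence is not a Baire space. Consequently, if $\mathbb{Q}$ embedded as a closed subspace of $X$, that subspace would be a non-Baire closed subspace of $X$, so $X$ could not be completely Baire. This already proves that a completely Baire $X$ admits no closed copy of $\mathbb{Q}$; note that this half uses neither regularity nor the weak Preiss-Simon property.

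For the sufficiency I would argue by contraposition. Assume $X$ is not completely Baire, so some nonempty closed subspace $F \subseteq X$ fails to be a Baire space. By the usual characterization of Baire spaces there is a nonempty open subset $U$ of $F$ that is of the first category in $F$, and since $U$ is open this is the same as saying $U$ is of the first category in itself; write $U = \bigcup\{A_n \colon n \in \omega\}$ with each $A_n$ nowhere dense in $F$. Set $G = \overline{U}$, the closure being taken in $F$ (equivalently in $X$, as $F$ is closed in $X$). Then $G$ is a nonempty closed subspace of $X$; $U$ is dense and open in $G$, so $G \setminus U$ is closed and nowhere dense in $G$; and, because $G$ is closed in $F$, each $A_n$ remains nowhere dense in $G$. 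Hence $G = (G \setminus U) \cup \bigcup\{A_n \colon n \in \omega\}$ exhibits $G$ as a countable union of nowhere dense subsets of itself, i.e.\ $G$ is of the first category in itself.

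Finally I would invoke the hypotheses on $X$: the subspace $G$ is regular (subspace of a regular space), and, being a nonempty closed subset of the hereditary weakly Preiss-Simon space $X$, it satisfies that $wPS(G)$ is dense in $G$. Thus $G$ meets all the hypotheses of Theorem~\ref{t:1}, which produces a closed copy of $\mathbb{Q}$ inside $G$; since $G$ is closed in $X$, that copy is also closed in $X$, contradicting our assumption. The only point requiring a little care is the verification that $G$ is of the first category in itself --- in particular, the elementary observation that a set nowhere dense in $F$ stays nowhere dense in the closed subspace $G = \overline{U}$ --- but this is routine, and all the substantive work has already been carried out in Theorem~\ref{t:1}.
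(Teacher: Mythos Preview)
Your proof is correct and follows the same route as the paper's: the forward direction is immediate, and for the converse you exhibit a closed first-category subspace and invoke Theorem~\ref{t:1}; the paper does exactly this, only it compresses your passage from a non-Baire closed $F$ to the first-category closed $G=\overline{U}$ into the single assertion ``$X$ contains a closed subset of the first category'' (a standard equivalence). One small caveat: the clause ``because $G$ is closed in $F$, each $A_n$ remains nowhere dense in $G$'' gives the wrong reason---closedness alone is insufficient (take $F=\mathbb{R}$, $A_n=G=\{0\}$)---and what actually makes it work is that $U$ is open in $F$ and dense in $G$, a point you rightly flag as the one needing care.
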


\begin{proof}
Clearly, if $X$ contains a closed copy of the space of rational numbers, then it cannot be completely Baire. For the other direction, aiming for a contradiction, suppose that $X$ is not completely Baire. Then $X$ contains a closed subset $F$ of the first category. It remains to apply Theorem~\ref{t:1} to $F$.
\end{proof}

\begin{lem}\label{L:2}
Let $f \colon X \rightarrow \mathbb{Q}$ be a continuous mapping of a Baire space $X$ onto the space of rational numbers $\mathbb{Q}$.
 
Then there is an open set $U \subset X$ such that $f(U)$ is not resolvable.
\end{lem}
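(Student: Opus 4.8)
The plan is to exhibit the required open set $U$ directly, using Baireness of $X$ only once. Write $\mathbb{Q} = \{q_n : n \in \omega\}$, and for each $n$ put $W_n = \mathrm{int}\, f^{-1}(q_n)$ and $D = \{n \in \omega : W_n \neq \emptyset\}$. Since the fibres of $f$ are pairwise disjoint closed sets, the $W_n$ $(n \in D)$ are pairwise disjoint nonempty open subsets of $X$ with $f(W_n) = \{q_n\}$. The "core" of the construction is $U^{*} := \bigcup_{n \in D} W_n$.

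First I would show that $U^{*}$ is dense in $X$. If it were not, there would be a nonempty open $V \subseteq X$ with $V \cap W_n = \emptyset$ for all $n$. As $V$ is open in the Baire space $X$, it is itself a Baire space; since $V = \bigcup_n (f^{-1}(q_n) \cap V)$ is a countable union of sets closed in $V$, some $f^{-1}(q_n) \cap V$ has nonempty interior in $V$, i.e. contains a nonempty open $G \subseteq V$. Then $G \subseteq \mathrm{int}\, f^{-1}(q_n) = W_n$, so $V \cap W_n \neq \emptyset$, a contradiction. Hence $U^{*}$ is dense in $X$, and therefore $Q_D := f(U^{*}) = \{q_n : n \in D\}$ is dense in $\mathbb{Q}$, being the image of a dense set under a continuous surjection. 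Consequently, for every nonempty open $B \subseteq \mathbb{Q}$ the set $\{n \in D : q_n \in B\}$ is infinite, because nonempty open subsets of $\mathbb{Q}$ are infinite and crowded, so their dense subsets are infinite.

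Next I split $D$ into two pieces with dense images. Fix a countable base $\{B_k : k \in \omega\}$ of nonempty open subsets of $\mathbb{Q}$. A routine recursion chooses pairwise distinct indices $n_k, m_k \in D$ with $q_{n_k}, q_{m_k} \in B_k$, which is possible since each $\{n \in D : q_n \in B_k\}$ is infinite. Taking any $D_1$ with $\{n_k : k \in \omega\} \subseteq D_1 \subseteq D \setminus \{m_k : k \in \omega\}$ and $D_2 = D \setminus D_1$, both $Q_{D_1} := \{q_n : n \in D_1\}$ and $Q_{D_2} := \{q_n : n \in D_2\}$ meet every $B_k$, hence are dense in $\mathbb{Q}$.

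Finally, put $U = \bigcup_{n \in D_1} W_n$. Since the $W_n$ are nonempty and pairwise disjoint, $f(U) = Q_{D_1}$. Now $Q_{D_1}$ is dense in $\mathbb{Q}$ and its complement contains the dense set $Q_{D_2}$, so for the closed set $F = \mathbb{Q}$ the boundary of $F \cap Q_{D_1} = Q_{D_1}$ relative to $F$ equals $\overline{Q_{D_1}} \cap \overline{\mathbb{Q} \setminus Q_{D_1}} = \mathbb{Q}$, which is not nowhere dense in $\mathbb{Q}$; by the characterization of resolvable sets recalled above, $f(U)$ is not resolvable. The only step that genuinely uses the hypotheses is the density of $U^{*}$ (where the Baire property of $X$, via that of the open subspace $V$, is invoked); everything afterwards is combinatorics, the sole mild care being to keep the indices $n_k, m_k$ distinct while forming $D_1$ and $D_2$.
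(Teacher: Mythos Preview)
Your proof is correct and follows essentially the same approach as the paper: the paper likewise takes the interiors of the fibres, shows their union is dense using Baireness, deduces that the set of ``large'' fibres has dense image in $\mathbb{Q}$, picks a dense co-dense subset $B$ of that image, and lets $U$ be the union of the corresponding fibre-interiors. You supply more detail (an explicit recursion to build the dense co-dense split and an explicit verification of non-resolvability), but the argument is the same.
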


\begin{proof}
Denote by $X^\circ_q$ the interior of $f^{-1}(q)$ for every point $q \in \mathbb{Q}$. Put $\mathbb{Q}^\circ = \{q \in \mathbb{Q} \colon X^\circ_q \neq \emptyset \}$.

Let us verify that the open set $A = \bigcup \{ X^\circ_q \colon q \in \mathbb{Q}^\circ \}$ is dense in $X$. Suppose the contrary and choose a nonempty open set $W \subset X$ missing $A$. For each point $q \in \mathbb{Q} \setminus \mathbb{Q}^\circ $ the intersection $f^{-1}(q) \cap W$ is a closed nowhere dense subset of $W$. For each point $q \in \mathbb{Q}^\circ $ the intersection $f^{-1}(q) \cap W = (f^{-1}(q) \setminus A) \cap W $ is the same. This implies that $W$ is a set of the first category on itself. On the other hand, $W$ is a Baire space as an open subset of the Baire space $X$, a contradiction.

Then $\mathbb{Q}^\circ = f(A)$ is dense in $\mathbb{Q}$ because $f$ is continuous.

Take $B \subset \mathbb{Q}^\circ$ such that $B$ is dense, co-dense in $\mathbb{Q}$. Clearly, $B$ is not a resolvable subset of $\mathbb{Q}$. The open set $U =
\bigcup \{ X^\circ_q \colon q \in B \}$ is as required.
\end{proof}

Now we are ready to give the main result of the paper.

\begin{thm}\label{Th-m}
Let $f \colon X \rightarrow Y$ be a continuous mapping of a completely Baire space $X$ onto a hereditary weakly Preiss-Simon regular space $Y$ such that the image of every open subset of $X$ is a resolvable set in $Y$. Then $Y$ is completely Baire.
\end{thm}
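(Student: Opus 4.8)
The plan is to reduce everything to Theorem~\ref{t:2} and then contradict Lemma~\ref{L:2}. Since $Y$ is a hereditary weakly Preiss-Simon regular space, Theorem~\ref{t:2} says that $Y$ is completely Baire as soon as the space of rational numbers does not embed in $Y$ as a closed subspace. So I would argue by contradiction and assume that there is a closed subset $Z \subseteq Y$ homeomorphic to $\mathbb{Q}$.

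The next step is to pass to the preimage. The set $P = f^{-1}(Z)$ is closed in $X$, hence, $X$ being completely Baire, $P$ is a Baire space. The restriction $g = f|_{P} \colon P \to Z$ is a continuous surjection of the Baire space $P$ onto $Z$, which is homeomorphic to $\mathbb{Q}$. Applying Lemma~\ref{L:2} to $g$, I obtain an open set $U \subseteq P$ such that $g(U)$ is not a resolvable subset of $Z$.

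Now I would transport this back. Write $U = V \cap P$ with $V$ open in $X$. A routine set-theoretic check gives $g(U) = f(V \cap f^{-1}(Z)) = f(V) \cap Z$: the inclusion $\subseteq$ is clear, and if $y \in f(V) \cap Z$ then any $v \in V$ with $f(v) = y$ lies in $f^{-1}(Z)$, so $y \in f(V \cap f^{-1}(Z))$. By the hypothesis on $f$, the set $f(V)$ is resolvable in $Y$; and the trace on the closed subspace $Z$ of a resolvable set of $Y$ is again resolvable in $Z$. Indeed, if $f(V) = \bigcup_{\xi} (F_\xi \setminus F_{\xi+1})$ for a decreasing transfinite sequence $\langle F_\xi \rangle$ of closed sets in $Y$, then $f(V) \cap Z = \bigcup_{\xi} \bigl( (F_\xi \cap Z) \setminus (F_{\xi+1} \cap Z) \bigr)$, and $\langle F_\xi \cap Z \rangle$ is a decreasing transfinite sequence of closed sets in $Z$. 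Hence $g(U) = f(V) \cap Z$ is resolvable in $Z$, contradicting the conclusion of the previous paragraph. Therefore no closed copy of $\mathbb{Q}$ sits in $Y$, and $Y$ is completely Baire by Theorem~\ref{t:2}.

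I do not anticipate a serious obstacle: the substantive work is already contained in Theorems~\ref{t:1}--\ref{t:2} and Lemma~\ref{L:2}. The only points that need (minor) care are the identity $g(U) = f(V) \cap Z$ and the stability of resolvability under passing to a closed subspace, both of which are immediate from the definitions.
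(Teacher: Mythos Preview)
Your argument is correct and essentially identical to the paper's own proof: both assume a closed copy of $\mathbb{Q}$ in $Y$ via Theorem~\ref{t:2}, apply Lemma~\ref{L:2} to the restriction of $f$ on its Baire preimage, and derive a contradiction from the fact that the trace of a resolvable set on a closed subspace is resolvable. You merely spell out the identity $g(U)=f(V)\cap Z$ and the closed-trace stability of resolvability in slightly more detail than the paper does.
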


\begin{proof}
Striving for a contradiction, suppose that $Y$ is not completely Baire. By Theorem~\ref{t:2}, $Y$ contains a closed copy $Q$ of the space of rational numbers.
The inverse image $f^{-1}(Q)$ is a Baire space as a closed subset of $X$. By Lemma~\ref{L:2}, there is an open in $f^{-1}(Q)$ set $U \subset f^{-1}(Q)$ such that $f(U)$ is not resolvable. Take an open set $U^* \subset X$ with $U^* \bigcap f^{-1}(Q) = U$. Then $f(U^*)$ is not resolvable because the closed subset $Q \bigcap f(U^*) = f(U)$ of a resolvable set must be resolvable. A contradiction.
\end{proof}

\end{document}